\newtheorem{theorem}{Theorem}[section]
\newtheorem{lemma}[theorem]{Lemma}
\newtheorem{conj}[theorem]{Conjecture}
\theoremstyle{definition}
\numberwithin{equation}{section}
\begin{document}
	
	%
	%
	%
	%
	%
	%
	%
	%
	%
	
	\author[Gurinder Singh]{Gurinder Singh}
	\address{Department of Mathematics, Indian Institute of Technology Guwahati, Assam, India, PIN- 781039}
	\email{gurinder.singh@iitg.ac.in}
	
	\author[Rupam Barman]{Rupam Barman}
	\address{Department of Mathematics, Indian Institute of Technology Guwahati, Assam, India, PIN- 781039}
	\email{rupam@iitg.ac.in}
	
	\title[Hook length inequalities for $t$-regular partitions in the $t$-aspect]
	{Hook length inequalities for $t$-regular partitions in the $t$-aspect}

	\date{\today}
	
	
\subjclass[2010]{11P81, 05A17, 05A19, 05A15}

\keywords{hook lengths; $t$-regular partitions; partition inequalities}

\begin{abstract}  Let $t\geq2$ and $k\geq1$ be integers. A $t$-regular partition of a positive integer $n$ is a partition of $n$ such that none of its parts is divisible by $t$. Let $b_{t,k}(n)$ denote the number of hooks of length $k$ in all the $t$-regular partitions of $n$. In this article, we prove some inequalities for $b_{t,k}(n)$ for fixed values of $k$. We prove that for any $t\geq2$, $b_{t+1,1}(n)\geq b_{t,1}(n)$, for all $n\geq0$. We also prove that $b_{3,2}(n)\geq b_{2,2}(n)$ for all $n>3$, and $b_{3,3}(n)\geq b_{2,3}(n)$ for all $n\geq0$. Finally, we state some problems for future works.
\end{abstract}

\maketitle
\section{Introduction and statement of results} 
A partition of a positive integer $n$ is a finite sequence of non-increasing positive integers $\lambda=(\lambda_1, \lambda_2, \ldots, \lambda_r)$ such that $\lambda_1+\lambda_2+\cdots +\lambda_r=n$. A \textit{Young diagram} of a partition $(\lambda_1, \lambda_2, \ldots, \lambda_r)$ is a left-justified array of boxes with the $i$-th row (from the top) having $\lambda_i$ boxes. 
For example, the Young diagram of the partition $(5,4,3,2,1)$ is shown in Figure \ref{Figure7.01} (left). The \textit{hook length} of a box in a Young diagram is the sum of the number of the boxes directly right to it, the number of boxes directly below it and 1 (for the box itself).
For example, see Figure \ref{Figure7.01} (right) for the hook lengths of each box in the Young diagram of the partition $(5,4,3,2,1)$.
\begin{figure}[h]
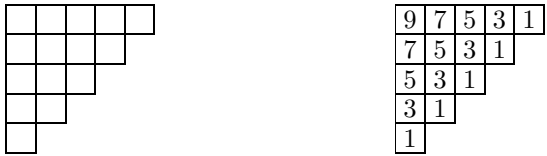

	\centering
	\begin{minipage}[b]{0.4\textwidth}
		\[\young(~~~~~,~~~~,~~~,~~,~)\]
	\end{minipage}
	\begin{minipage}[b]{0.4\textwidth}
		\[\young(97531,7531,531,31,1)\]
	\end{minipage}
	\caption{The Young diagram of the partition $(5,4,3,2,1)$ and its hook lengths}\label{Figure7.01}
\end{figure}
\par 
Hook lengths of partitions have important connections with representation theory of the symmetric groups $S_n$ and $\text{GL}_n(\mathbb{C})$. Hook lengths also appear in the Seiberg-Witten theory of random partitions, which gives the Nekrasov-Okounkov formula for arbitrary powers of Euler's infinite product in terms of hook numbers. For more details, see e.g. \cite{Garvan_1990, James, Littlewood,Nekrasov, Stanley}. Other than the ordinary partition function, hook lengths have also been studied for several restricted partition functions, for example, partitions into odd parts, partitions into distinct parts, partitions into odd and distinct parts, 
self conjugate partitions and doubled distinct partitions, see e.g. \cite{Ono_Singh, Ballantine_2023,Craig,Han_2016,Han_2017,Petreolle,Singh_Barman}.
\par 
Let $t\geq 2$ be a fixed positive integer. A $t$-regular partition of a positive integer $n$ is a partition of $n$ such that none of its parts is divisible by $t$. A $t$-distinct partition of a positive integer $n$ is a partition of $n$ such that any of its parts can occur at most $t-1$ times. For integers $t\geq2$ and $k\geq1$, let $b_{t,k}(n)$ denote the number of hooks of length $k$ in all the $t$-regular partitions of $n$ and $d_{t,k}(n)$ denote the number of hooks of length $k$ in all the $t$-distinct partitions of $n$. In \cite{Ballantine_2023}, Ballantine et al. studied hook lengths in $2$-regular partitions and $2$-distinct partitions. The authors, in \cite{Ballantine_2023}, proved that $d_{2,1}(n)\geq b_{2,1}(n)$, for all $n\geq0$. They conjectured \cite[Conjecture 1.7]{Ballantine_2023} that for every $k\geq2$, there exists an integer $N_k$ such that $b_{2,k}(n)\geq d_{2,k}(n)$, for all $n\geq N_k$. Ballantine et al. \cite[Theorem 1.8]{Ballantine_2023} proved the conjecture for $k=2,3$ and very recently Craig et al. \cite{Craig} proved it for all $k$. This type of partition inequalities between the number of hook lengths are also called hook length biases. In \cite{Singh_Barman}, we studied the hook length biases for $2$- and $3$-regular partitions for different hook lengths. We established two hook length biases for $2$-regular partitions, namely, $b_{2,2}(n)\geq b_{2,1}(n)$, for all $n>4$ and $b_{2,2}(n)\geq b_{2,3}(n)$, for all $n\geq0$. We also proposed two conjectures on biases among $2$- and $3$-regular  partitions, see \cite[Conjectures 1.6 and 6.1]{Singh_Barman}.
\par In this article, we study biases among $b_{t,k}(n)$ for fixed $k$. Our first result proves that the number of hooks of length 1 in ($t+1)$-regular partitions of any nonnegative integer $n$ is greater than or equal to the number of hooks of length 1 in $t$-regular partitions of $n$. More precisely, we have the following theorem.
\begin{theorem}\label{Theorem7.01}
Let $t\geq2$ be an integer. We have $b_{t+1,1}(n)\geq b_{t,1}(n)$, for all $n\geq0$.
\end{theorem}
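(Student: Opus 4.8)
The plan is to express $b_{t,1}(n)$ as a weighted sum of values of the $t$-regular partition counting function, and then compare the resulting expressions for consecutive values of $t$ using two elementary monotonicity properties.

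First I would record the standard fact that in any Young diagram the boxes of hook length $1$ are exactly the outer corners, and that these are in bijection with the distinct part sizes of the partition; hence $b_{t,1}(n)=\sum_{m}a_t(m,n)$, where $a_t(m,n)$ denotes the number of $t$-regular partitions of $n$ containing $m$ as a part and the sum runs over $m\ge1$ with $t\nmid m$. A one-line generating-function computation gives $a_t(m,n)=p_t(n-m)$ whenever $t\nmid m$: the generating function for those $t$-regular partitions that avoid the part $m$ is $(1-q^m)$ times $\prod_{t\nmid k}(1-q^k)^{-1}$, so subtracting from $\prod_{t\nmid k}(1-q^k)^{-1}$ leaves $q^m\prod_{t\nmid k}(1-q^k)^{-1}$. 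Here $p_t(j)$ is the number of $t$-regular partitions of $j$, with $p_t(0)=1$ and $p_t(j)=0$ for $j<0$. Thus $b_{t,1}(n)=\sum_{m\ge1,\ t\nmid m}p_t(n-m)$, and the theorem becomes $\sum_{m\ge1,\ (t+1)\nmid m}p_{t+1}(n-m)\ \ge\ \sum_{m\ge1,\ t\nmid m}p_t(n-m)$.

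Next I would establish two lemmas about $p_t$. (i) $p_{t+1}(j)\ge p_t(j)$ for all $j\ge0$: by Glaisher's identity $\prod_{k\ge1}\frac{1-q^{tk}}{1-q^k}=\prod_{k\ge1}\bigl(1+q^k+\cdots+q^{(t-1)k}\bigr)$, the number $p_t(j)$ counts partitions of $j$ in which every part occurs at most $t-1$ times, and any such partition also has every part occurring at most $t$ times. (ii) $p_t$ is non-decreasing, $p_t(j+1)\ge p_t(j)$: on the Glaisher side, send a partition (with every part repeated at most $t-1$ times) to the one obtained by increasing its largest part by $1$, and $\emptyset\mapsto(1)$; the image stays in this class and the map is visibly injective. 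Using (i) it suffices to prove $\sum_{m\ge1,\ (t+1)\nmid m}p_{t+1}(n-m)\ \ge\ \sum_{m\ge1,\ t\nmid m}p_{t+1}(n-m)$, so from here only the single function $p_{t+1}$ appears.

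Finally I would cancel the terms common to the two sides — those $m$ divisible by neither $t$ nor $t+1$ — reducing the problem to $\sum_{m\ge1,\ t\mid m,\ (t+1)\nmid m}p_{t+1}(n-m)\ \ge\ \sum_{m\ge1,\ (t+1)\mid m,\ t\nmid m}p_{t+1}(n-m)$. Writing $m=tj$ on the left and $m=(t+1)j$ on the right, and using $\gcd(t,t+1)=1$ (so that $t\mid m$ together with $(t+1)\mid m$ is equivalent to $t(t+1)\mid m$), each side equals a full sum over $j\ge1$ minus the \emph{same} quantity $\sum_{i\ge1}p_{t+1}\bigl(n-t(t+1)i\bigr)$; hence the inequality reduces to $\sum_{j\ge1}p_{t+1}(n-tj)\ \ge\ \sum_{j\ge1}p_{t+1}\bigl(n-(t+1)j\bigr)$, which holds term by term since $n-tj\ge n-(t+1)j$ and $p_{t+1}$ is non-decreasing (with $p_{t+1}$ vanishing on negatives). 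The argument is largely bookkeeping; the two places needing a genuine, if modest, idea are the reduction $b_{t,1}(n)=\sum_{t\nmid m}p_t(n-m)$ and the observation that $\gcd(t,t+1)=1$ makes the two leftover sums in the last step literally identical, so I expect the main care to be needed there rather than in any hard estimate.
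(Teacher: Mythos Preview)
Your argument is correct and genuinely different from the paper's. The paper constructs an explicit injection $\Phi_{t,n}:\mathcal{B}_t(n)\to\mathcal{B}_{t+1}(n)$ (splitting each part divisible by $t+1$ into a multiple of $t$ and a small remainder) and then checks that $\Phi_{t,n}$ never decreases the number of distinct parts; since $h_1(\lambda)=\overline{\ell}(\lambda)$, summing over $\mathcal{B}_t(n)$ gives the inequality directly. Your route instead rewrites $b_{t,1}(n)=\sum_{t\nmid m}p_t(n-m)$ and reduces the comparison to two monotonicity facts for $p_t$ plus an inclusion--exclusion step exploiting $\gcd(t,t+1)=1$, finishing with the termwise inequality $p_{t+1}(n-tj)\ge p_{t+1}(n-(t+1)j)$. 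The paper's bijection pays off later in the article, where the same map $\Phi_{2,n}$ is the backbone of the proofs for $k=2,3$; your approach, by contrast, is entirely self-contained for $k=1$, avoids constructing any injection, and would adapt immediately to comparing $b_{s,1}(n)$ and $b_{t,1}(n)$ for any $s\le t$ (not just consecutive values), since every step---$p_s\le p_t$, the cancellation of common $m$, and the final termwise bound---goes through with $t,t+1$ replaced by $s,t$ once one notes the subtracted sums on both sides run over multiples of $\mathrm{lcm}(s,t)$. Hmm, actually that last generalization needs a little more care when $\gcd(s,t)>1$, but for the stated theorem your proof is clean and complete.
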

For the number of hooks of length 2, we expect the same trend in $t$-regular partitions of any positive integer $n$. Our second result confirms the bias for the number of hooks of length 2 between $2$- and $3$-regular partitions. 
\begin{theorem}\label{Thm_main7.1}
For all integers $n>3$, we have $b_{3,2}(n)\geq b_{2,2}(n)$.
\end{theorem}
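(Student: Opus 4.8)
My plan is to pass to generating functions and reduce the statement to a coefficient‑positivity property of one explicit series.

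The cells of hook length $2$ in a partition $\lambda$ are in bijection with the removable rim hooks of size $2$, that is, with the horizontal and vertical dominoes that can be deleted from the Young diagram to leave a partition; concretely, the number of such hooks equals the number of part‑sizes $v\geq 2$ of $\lambda$ with $v-1$ not a part of $\lambda$ (one removable horizontal domino each), plus the number of part‑sizes of $\lambda$ occurring at least twice (one removable vertical domino each). Summing these two counts over all $t$‑regular partitions of $n$ and writing $R_t(q)=\prod_{t\nmid m}(1-q^m)^{-1}$, a routine series computation gives
\[
\sum_{n\geq 0}b_{t,2}(n)\,q^n=R_t(q)\,F_t(q),\qquad F_t(q)=\frac{2q^2}{1-q^2}-\frac{q^t}{1-q^t}+\frac{q^{2t-1}+q^{2t+1}-q^{2t}}{1-q^{2t}}.
\]
In particular $F_2(q)=\dfrac{q^2+q^3+q^5}{1-q^4}$, all of whose coefficients lie in $\{0,1\}$, and a short simplification yields $F_3(q)-F_2(q)=\dfrac{q^2(1-q+q^2)^2}{(1+q^2)(1-q^6)}$.

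Put $D(q)=R_3(q)F_3(q)-R_2(q)F_2(q)$; it suffices to prove $[q^n]D(q)\geq 0$ for every $n\geq 4$, and since the $q^2$‑ and $q^3$‑coefficients of $D$ are $1$ and $-1$, the hypothesis $n>3$ is sharp. I would split
\[
D(q)=\bigl(R_3(q)-R_2(q)\bigr)F_2(q)+R_3(q)\bigl(F_3(q)-F_2(q)\bigr).
\]
By Glaisher's theorem, $R_t(q)$ enumerates the partitions in which every part occurs fewer than $t$ times, a family that grows with $t$, so $R_3(q)-R_2(q)$ has non‑negative coefficients; since $F_2$ does as well, the first summand is harmless. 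For the second summand, using $\dfrac{1}{1+q+q^2}=\dfrac{1-q}{1-q^3}$ one simplifies
\[
R_3(q)\bigl(F_3(q)-F_2(q)\bigr)=q^2(1-q+q^2)\,M(q),\qquad M(q):=\frac{(1-q)R_3(q)}{(1-q^3)(1-q^4)},
\]
and $M$ has non‑negative coefficients because $(1-q)R_3(q)=\prod_{j\geq 2,\, 3\nmid j}(1-q^j)^{-1}$ is itself a partition generating function. Extracting the coefficient of $q^n$ in $q^2(1-q+q^2)M(q)$ gives $[q^{n-2}]M-[q^{n-3}]M+[q^{n-4}]M$, so the whole theorem reduces to the quasi‑convexity inequality
\[
[q^{k}]M+[q^{k-2}]M\geq[q^{k-1}]M\qquad\text{for all }k\geq 2.
\]

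The main obstacle is this last inequality. The series $M$ is a bona fide restricted‑partition generating function — partitions into parts $\geq 2$ that are not multiples of $3$, with the part $4$ carrying an extra colour and the part $3$ also permitted — but it is not monotone; it has sporadic unit‑size ``dips'', so the inequality is not formal. I would prove it by showing $[q^k]M\geq[q^{k-1}]M$ away from those dip indices and, at each dip index $k$, bounding $[q^{k-1}]M-[q^k]M$ by $[q^{k-2}]M$ via an injection on the underlying restricted partitions, completing the argument with a direct check of the finitely many small $k$. A purely bijective alternative would be a weight‑preserving injection from pairs (a $2$‑regular partition of $n$ together with a removable domino) into pairs (a $3$‑regular partition of $n$ together with a removable domino), but tracking the domino through such a map looks more delicate, so I expect the generating‑function route to be the more workable one.
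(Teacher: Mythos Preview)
Your route is entirely different from the paper's: the paper never touches generating functions and instead builds a combinatorial injection $\Phi_{2,n}:\mathcal B_2(n)\to\mathcal B_3(n)$, tracks how each ``triple'' $\tau_k=((6k+5)^{\alpha_{k,5}},(6k+3)^{\alpha_{k,3}},(6k+1)^{\alpha_{k,1}})$ can lose at most one $2$-hook under $\Phi_{2,n}$, and for every such loss manufactures a companion $3$-regular partition $\tau'$ carrying a compensating $2$-hook. In fact the authors explicitly remark that a generating-function proof of this theorem would be of interest, so your reduction is exactly the kind of argument they leave open. Your algebra checks out: the formula for $F_t$, the identity $F_3-F_2=\dfrac{q^2(1-q+q^2)^2}{(1+q^2)(1-q^6)}=\dfrac{q^2(1-q+q^2)}{(1+q+q^2)(1-q^4)}$, and the simplification $R_3(F_3-F_2)=q^2(1-q+q^2)M$ with $M=\dfrac{(1-q)R_3}{(1-q^3)(1-q^4)}$ are all correct, and Glaisher indeed gives $(R_3-R_2)F_2\succeq 0$ termwise.

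The genuine gap is the last step. Everything hinges on the inequality $[q^k]M+[q^{k-2}]M\geq[q^{k-1}]M$, and you do not prove it; you only outline a plan (``show monotonicity away from dips, handle each dip by an injection, check small $k$''). As you yourself observe, $M$ is not monotone---for instance $M_4=3>M_5=2$ and $M_8=9>M_9=8$---so the inequality is not formal, and your plan leaves unspecified both why the dip indices are finite (or otherwise controllable) and what the promised injection is. Writing $(1-q+q^2)M=\dfrac{1-q+q^2}{1+q+q^2}\cdot\dfrac{R_3}{1-q^4}$ shows that the sign pattern is governed by the alternating factor $\dfrac{1-q+q^2}{1+q+q^2}=1-2q+2q^2-2q^4+2q^5-\cdots$, so a proof will need a genuinely new combinatorial or analytic ingredient at this point. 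Until that inequality is established, the argument is a reduction, not a proof; by contrast, the paper's case analysis, while longer, is self-contained.
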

We observe similar inequality for hooks of length 3. In particular, we have the following theorem.
\begin{theorem}\label{Thm_main7.2}
For all nonnegative integers $n$, we have $b_{3,3}(n)\geq b_{2,3}(n)$.
\end{theorem}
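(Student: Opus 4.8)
\emph{Proof plan.} The proof is by generating functions, in the spirit of \cite{Singh_Barman} and parallel to the argument for Theorem~\ref{Thm_main7.1}. The first task is an exact formula for $\sum_{n\ge 0}b_{t,3}(n)q^n$. Write a $t$-regular partition as $\lambda=(a_1^{m_1},\dots,a_s^{m_s})$ with $a_1>\dots>a_s\ge 1$, $t\nmid a_i$, $m_i\ge 1$. A cell of hook length $3$ has (arm, leg) equal to $(2,0)$, $(1,1)$ or $(0,2)$; in each case the cell is forced to lie in, or just above, the last row of a block of equal parts, and a short case analysis shows that such a cell occurs precisely in one of the following four situations: (A) a part $a\ge 3$ is present while $a-1$ and $a-2$ are absent; (B) a part $a\ge 2$ occurs at least twice while $a-1$ is absent; (C) consecutive part-values $a$ and $a-1$ are both present and $a-1$ occurs exactly once; (D) some part occurs at least three times (here any part-value divisible by $t$ is automatically absent). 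Summing each of the four contributions over all $t$-regular partitions of $n$, every contribution collapses to a sum over a single part-value $a$ of a monomial times $\prod(1-q^{a-c})$ (the factors $1-q^{a-c}$ deleting the forbidden part-values) times the $t$-regular partition series $(q^t;q^t)_\infty/(q;q)_\infty$; summing the resulting geometric series gives
\[
\sum_{n\ge 0}b_{t,3}(n)\,q^n=\frac{(q^t;q^t)_\infty}{(q;q)_\infty}\,\mathcal R_t(q),
\]
where $\mathcal R_t(q)$ is an explicit finite sum of rational functions of $q$. After specialising and simplifying, one finds that both $\mathcal R_2(q)$ and $\mathcal R_3(q)$ contain the same clean summand $\dfrac{q^3}{1-q}$, so writing $\mathcal R_t(q)=\dfrac{q^3}{1-q}+S_t(q)$ and subtracting gives
\[
\sum_{n\ge 0}\bigl(b_{3,3}(n)-b_{2,3}(n)\bigr)q^n
=\Bigl(\frac{(q^3;q^3)_\infty}{(q;q)_\infty}-\frac{(q^2;q^2)_\infty}{(q;q)_\infty}\Bigr)\frac{q^3}{1-q}
+\Bigl(\frac{(q^3;q^3)_\infty}{(q;q)_\infty}\,S_3(q)-\frac{(q^2;q^2)_\infty}{(q;q)_\infty}\,S_2(q)\Bigr).
\]

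The first summand has nonnegative coefficients: $\dfrac{q^3}{1-q}$ obviously does, and $\dfrac{(q^3;q^3)_\infty}{(q;q)_\infty}-\dfrac{(q^2;q^2)_\infty}{(q;q)_\infty}$ does because, by Glaisher's theorem, these two series count the partitions of $n$ in which no part appears three or more times and the partitions of $n$ into distinct parts respectively, and the latter class is contained in the former. So the theorem reduces to showing that the second summand, call it $\mathcal D(q)$, has nonnegative coefficients.

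Step three, the positivity of $\mathcal D(q)$, is the crux. Now $S_2(q)$ and $S_3(q)$ each carry some negative coefficients, so $\mathcal D(q)$ is not nonnegative termwise, and the strategy is to iterate the peeling-off: split $S_2$ and $S_3$ so as to extract further pieces that are products of series already known to have nonnegative coefficients ($\dfrac{1}{(q;q)_\infty}$, the $3$-regular-minus-$2$-regular difference above, and explicit polynomials with nonnegative coefficients), reducing $\mathcal D(q)$, modulo such nonnegative pieces, to a $q$-series with only finitely many rational summands. For that residual series I would check the coefficients directly up to an explicit bound $n\le N_0$, and for $n>N_0$ bound the finitely many negative monomial contributions against the positive bulk using crude monotonicity estimates for the partition counts involved.

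The main obstacle is precisely Step three. Because the Euler products $(q^2;q^2)_\infty$ and $(q^3;q^3)_\infty$ admit no common refinement with nonnegative coefficients, there is no mechanical route to the positivity of $\mathcal D(q)$; the right regrouping has to be found by hand. Moreover the difference sequence $b_{3,3}(n)-b_{2,3}(n)$ begins $0,0,0,0,2,3,1,\dots$, so it is small and not monotone and a crude dominance estimate alone will not suffice — the low-order terms genuinely have to be treated separately. Steps one and two, by contrast, are routine bookkeeping and algebra.
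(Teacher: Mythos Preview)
Your proposal is a plan, not a proof: you yourself identify Step~3---the nonnegativity of $\mathcal D(q)$---as ``the crux'' and ``the main obstacle,'' and you do not carry it out. A promise to ``iterate the peeling-off,'' to ``find the right regrouping by hand,'' and to ``bound the finitely many negative monomial contributions against the positive bulk'' is not an argument until the regrouping is exhibited and the bounds verified. Since $(q^2;q^2)_\infty$ and $(q^3;q^3)_\infty$ share no useful multiplicative structure, there is no a priori reason the residual $\mathcal D(q)$ decomposes nicely, and you offer no evidence that it does. Even Step~1 is not as routine as you suggest: the paper's own Concluding Remarks state that the generating function of $b_{3,3}(n)$ ``is not yet known,'' so the derivation of your $\mathcal R_3(q)$ would itself be new and needs to be written out in full, with the divisibility constraints (e.g.\ your case~(C) is vacuous for $t=2$ but not for $t=3$) handled explicitly.

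The paper's proof is completely different and is fully executed. It uses the injection $\Phi_{2,n}\colon\mathcal B_2(n)\to\mathcal B_3(n)$ that replaces each part $6k+3$ by $(6k+2,1)$, classifies $3$-hooks into the same four types you list, and shows that each ``triple'' $\tau_k$ loses at most one $3$-hook under $\Phi_{2,n}$ for $k\ge 1$ and at most two for $k=0$. Whenever a loss occurs, the paper explicitly writes down a second $3$-regular partition $\tau'$ (via case-defined maps $F,G,H,I$ producing replacement blocks $\theta_k$) that supplies the missing $3$-hooks, and checks that all the images $\Phi_{2,n}(\tau)$ and the compensating $\tau'$ are pairwise distinct. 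Summing hook counts then gives $b_{2,3}(n)\le b_{3,3}(n)$ for $n>6$, with $n\le 6$ verified directly. There is no generating-function manipulation and no asymptotic estimate; by contrast, your outline defers the entire difficulty to an unexecuted positivity step.
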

\section{Proof of Theorem \ref{Theorem7.01}}
We introduce some notations. Let $\overline{\ell}(\lambda)$ denote the number of distinct parts in a partition $\lambda$. Let $h_k(\lambda)$ denote the number of hooks of length $k$ in the Young diagram of a partition $\lambda$. We recall another form of representation of a partition $\lambda$ given by
$$\lambda=(\lambda^{m_1}_1,\lambda^{m_2}_2,\ldots,\lambda^{m_r}_r),$$
where $m_i$ is the multiplicity of the part $\lambda_i$ and $\lambda_1>\lambda_2>\cdots>\lambda_r$. With this notation, for any partition $\lambda$, we consider $\lambda_{\overline{\ell}(\lambda)+1}=0$.
\begin{center}
	\begin{table}
		\caption{$\Phi_{t,n}$ for $t=3$ and $n=12$}\label{Table7.001}
		\begin{tabular}[h]{|c|c||c|c|}
			\hline
			$\tau\in\mathcal{B}_3(12)$ & $\Phi_{3,12}(\tau)\in\mathcal{B}_{4}(12)$ & $\tau\in\mathcal{B}_3(12)$ & $\Phi_{3,12}(\tau)\in\mathcal{B}_{4}(12)$ \\
			\hline 
			$(8,4)$ & $(6,3,2,1)$ & $(8,2^2)$ & $(6,2^3)$\\
			\hline 
			$(8,2,1^2)$ & $(6,2^2,1^2)$ & $(8,1^4)$ & $(6,2,1^4)$\\
			\hline 
			$(7,4,1)$ & $(7,3,1^2)$ & $(5,4,2,1)$ & $(5,3,2,1^2)$\\
			\hline 
			$(5,4,1^3)$ & $(5,3,1^4)$ & $(4^3)$ & $(3^3,1^3)$\\
			\hline 
			$(4^2,2^2)$ & $(3^2,2^2,1^2)$ & $(4^2,2,1^2)$ & $(3^2,2,1^4)$\\
			\hline 
			$(4^2,1^4)$ & $(3^2,1^6)$ & $(4,2^4)$ & $(3,2^4,1)$\\
			\hline 
			$(4,2^3,1^2)$ & $(3,2^3,1^3)$ & $(4,2^2,1^4)$ & $(3,2^2,1^5)$\\
			\hline 
			$(4,2,1^6)$ & $(3,2,1^7)$ & $(4,1^8)$ & $(3,1^9)$\\
			\hline
		\end{tabular}
	\end{table}
\end{center}
\par To prove Theorem \ref{Theorem7.01} we first prove the following lemma. Let $b_t(n)$ denote the number of $t$-regular partitions of a positive integer $n$.
\begin{lemma}\label{Lemma7.1}
Let $t\geq2$ be an integer. We have $b_{t+1}(n)\geq b_{t}(n)$, for all $n\geq0$.
\end{lemma}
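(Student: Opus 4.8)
Since any injection $\mathcal{B}_t(n)\hookrightarrow\mathcal{B}_{t+1}(n)$ already forces $b_{t+1}(n)\ge b_t(n)$, the plan is to exhibit one explicitly, namely the map $\Phi_{t,n}$ of Table~\ref{Table7.001}; I want it explicit because the proof of Theorem~\ref{Theorem7.01} below reuses it and needs to know how it affects $\overline{\ell}$. I would describe $\Phi_{t,n}$ in two equivalent ways. Dynamically: given $\tau\in\mathcal{B}_t(n)$, repeatedly pick a part divisible by $t+1$, say $(t+1)m$, replace that single part by the two parts $tm$ and $m$, and stop once no part is divisible by $t+1$. In closed form: write each part of $\tau$ as $(t+1)^{a}\lambda'$ with $(t+1)\nmid\lambda'$, and replace it by the multiset containing $\binom{a}{j}$ copies of $t^{j}\lambda'$ for each $0\le j\le a$. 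Note that since $\tau$ has no part divisible by $t$ and $\gcd(t,t+1)=1$, the core $\lambda'$ of every part also satisfies $t\nmid\lambda'$.

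The setup requires only routine checks. The sum of the parts is preserved, since $\sum_{j}\binom{a}{j}t^{j}\lambda'=(1+t)^{a}\lambda'=(t+1)^{a}\lambda'$. The image lies in $\mathcal{B}_{t+1}(n)$, since each output part $t^{j}\lambda'$ fails to be divisible by $t+1$ (as $\gcd(t+1,t)=1$ and $(t+1)\nmid\lambda'$). The two descriptions agree by induction on $a$: splitting $(t+1)^{a}\lambda'$ gives $(t+1)^{a-1}(t\lambda')$ and $(t+1)^{a-1}\lambda'$, and combining the two induction hypotheses with Pascal's rule $\binom{a-1}{j-1}+\binom{a-1}{j}=\binom{a}{j}$ produces the asserted multiset; the dynamical description is independent of the choices made because each rewriting touches a single part and the process terminates (every step strictly decreases $\sum_i\lambda_i^2$, as $(tm)^2+m^2-((t+1)m)^2=-2tm^{2}<0$).

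The only substantial point is injectivity, and the closed form reduces it to linear algebra. If $t\nmid\lambda'$ and $t\nmid\lambda''$, then $t^{j}\lambda'=t^{j'}\lambda''$ forces $j=j'$ and $\lambda'=\lambda''$ (compare the largest power of $t$ dividing each side); hence parts of $\tau$ with different cores contribute parts of $\Phi_{t,n}(\tau)$ that never coincide, so it suffices to treat one core at a time. Fixing a core $\lambda'$, write $M_{a}$ for the multiplicity of $(t+1)^{a}\lambda'$ in $\tau$ and $N_{j}$ for the multiplicity of $t^{j}\lambda'$ in $\Phi_{t,n}(\tau)$; then $N_{j}=\sum_{a\ge j}\binom{a}{j}M_{a}$. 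This is an upper unitriangular linear system over $\mathbb{Z}$, so the $M_{a}$ are recovered from the $N_{j}$ by binomial inversion, i.e.\ $\tau$ is recovered from $\Phi_{t,n}(\tau)$. Therefore $\Phi_{t,n}$ is injective and $b_{t+1}(n)\ge b_t(n)$.

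I expect injectivity to be the only real obstacle; the rest is bookkeeping. I would also record, for use in the proof of Theorem~\ref{Theorem7.01}, that the same closed form gives $\overline{\ell}(\Phi_{t,n}(\tau))\ge\overline{\ell}(\tau)$: for each core $\lambda'$ the input contributes $\#\{a:M_{a}>0\}$ distinct parts while the output contributes $\max\{a:M_{a}>0\}+1$ distinct parts, and a set of nonnegative integers of size $s$ has maximum at least $s-1$. (Alternatively one can settle the lemma in one line via Glaisher's identity, since $b_t(n)$ also counts the partitions of $n$ in which no part is repeated $t$ or more times, and that set is contained in the corresponding set for $t+1$; but the explicit $\Phi_{t,n}$ is the form needed in what follows.)
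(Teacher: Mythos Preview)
Your proof is correct, but the injection you build is \emph{not} the paper's $\Phi_{t,n}$, even though the two happen to coincide on Table~\ref{Table7.001} (for $t=3$, $n=12$ every relevant part is below $(t+1)^2$, so only one split ever occurs). The paper performs a \emph{single} split per part: writing a part divisible by $t+1$ as $(t+1)(t\ell+r)$ with $1\le r\le t-1$, it replaces it by the pair $\bigl(t((t+1)\ell+r),\,r\bigr)$, which is already $(t+1)$-free; for $t=2$ this sends $6k+3\mapsto(6k+2,1)$, and that specific shape is exactly what the proofs of Theorems~\ref{Thm_main7.1} and~\ref{Thm_main7.2} exploit triple by triple. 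You instead iterate the split $(t+1)m\mapsto(tm,m)$, equivalently expand $(t+1)^a\lambda'=\sum_j\binom{a}{j}t^j\lambda'$; e.g.\ for $t=2$ your map sends $9\mapsto(4,2,2,1)$ rather than $(8,1)$. Your route buys a much cleaner injectivity argument (binomial inversion on an upper unitriangular system) and a transparent proof of $\overline{\ell}(\Phi_{t,n}(\tau))\ge\overline{\ell}(\tau)$, both of which the paper treats somewhat informally. The paper's route buys the exact map used downstream: if you proceed to Theorems~\ref{Thm_main7.1}--\ref{Thm_main7.2} with your version of $\Phi_{2,n}$, the $2$- and $3$-hook bookkeeping there will not line up. Your closing Glaisher remark is also a valid---and arguably the slickest---proof of the lemma in isolation.
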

\begin{proof}
Let $\mathcal{B}_t(n)$ denote the set of all $t$-regular partitions of $n$. For fixed $t$ and $n$, define a map $\Phi_{t,n}:\mathcal{B}_t(n)\rightarrow\mathcal{B}_{t+1}(n)$. For any $\tau\in\mathcal{B}_t(n)$, $\Phi_{t,n}(\tau)$ is a partition in $\mathcal{B}_{t+1}(n)$ with parts from $\tau$ which are multiple of $t+1$ changed in such a way that they are not multiple of $t+1$ and other parts remain same. A part of $\tau$ which is a multiple of $t+1$, is of the form $(t+1)(t\ell+r)=t(t+1)\ell+r(t+1)$, for some nonnegative integer $\ell$ and $1\leq r\leq t-1$ ($r\neq0$, since $\tau\in\mathcal{B}_t(n)$). Under the map $\Phi_{t,n}$ the part of part size $t(t+1)\ell+r(t+1)$ of $\tau$ is changed to $(t(t+1)\ell+rt,~r)$, which means that $t(t+1)\ell+rt$ and $r$ are considered as two parts in $\Phi_{t,n}(\tau)$. For example, Table \ref{Table7.001} shows the mapping of $3$-regular partitions of 12 to $4$-regular partitions of 12 under the map $\Phi_{3,12}$. The $3$-regular partitions of 12 which are not listed in Table \ref{Table7.001} are also $4$-regular partitions of 12 and hence mapped to themselves. Next, we prove that $\Phi_{t,n}$ is an injective map. For $\tau_1,\tau_2\in \mathcal{B}_t(n)$, let $\Phi_{t,n}(\tau_1)=\Phi_{t,n}(\tau_2)$. The parts of $\Phi_{t,n}(\tau_1)$ and $\Phi_{t,n}(\tau_2)$ which are not of the type $t(t+1)\ell+rt$ or $r$ (for some nonnegative integer $\ell$ and $1\leq r\leq t-1$) are also the parts of $\tau_1$ and $\tau_2$. If $t(t+1)\ell+rt$ and $r$ are the parts of $\Phi_{t,n}(\tau_1)$ and $\Phi_{t,n}(\tau_2)$ with multiplicity, say $m$, then $(t+1)(t\ell+r)$ is a part in both $\tau_1$ and $\tau_2$ with multiplicity $m$. This implies that $\tau_1=\tau_2$. Therefore, $\Phi_{t,n}$ is an injective map. This proves that $|\mathcal{B}_t(n)|\leq |\mathcal{B}_{t+1}(n)|$, i.e., $b_{t}(n)\leq b_{t+1}(n)$.  
\end{proof}
\begin{proof}[Proof of Theorem \ref{Theorem7.01}]
It is easy to observe that for any partition $\tau$ the number of hooks of length 1 in the Young diagram of $\tau$ is same as the number of distinct parts of $\tau$, i.e., $h_1(\tau)=\overline{\ell}(\tau)$. From Lemma \ref{Lemma7.1}, we have that $b_t(n)\leq b_{t+1}(n)$, for all $n\geq0$. Note that the number of distinct parts in $\tau\in\mathcal{B}_t(n)$ is less than or equal to the number of distinct parts in $\Phi_{t,n}(\tau)\in\mathcal{B}_{t+1}(n)$. Therefore, $b_{t,1}(n)\leq b_{t+1,1}(n)$, for all $n\geq0$.
\end{proof}
\section{Proofs of Theorems \ref{Thm_main7.1} and \ref{Thm_main7.2}}
We represent a partition $\tau$ from $\mathcal{B}_2(n)$ by
$$\left((6k+5)^{\alpha_{k,5}},~(6k+3)^{\alpha_{k,3}},~(6k+1)^{\alpha_{k,1}}  \right)_{k\geq0},$$	
where 
$\alpha_{k,j}$ is the multiplicity of the part $6k+j$. From a partition $\tau\in\mathcal{B}_2(n)$, we define triples by $$\tau_k=\left((6k+5)^{\alpha_{k,5}},~(6k+3)^{\alpha_{k,3}},~(6k+1)^{\alpha_{k,1}}  \right)_k,$$ 
such that $\tau=(\tau_k)_{k\geq0}$. The map $\Phi_{2,n}: \mathcal{B}_2(n)\rightarrow\mathcal{B}_3(n)$ is defined by
\begin{align*}
\Phi_{2,n}(\tau)&=\Phi_{2,n}\left(((6k+5)^{\alpha_{k,5}},~(6k+3)^{\alpha_{k,3}},~(6k+1)^{\alpha_{k,1}})_{k\geq0} \right)\nonumber\\
&:=\left((6k+5)^{\alpha_{k,5}},~(6k+2)^{\alpha_{k,3}},~(6k+1)^{\alpha_{k,1}} ;~1^{\alpha_{k,3}}\right)_{k\geq0}.
\end{align*}
We take
$(\Phi_{2,n}(\tau))_k=\left\{
\begin{array}{lll}
\left((6k+5)^{\alpha_{k,5}},~(6k+2)^{\alpha_{k,3}},~(6k+1)^{\alpha_{k,1}} \right)_k & \text{if}\ k\geq1;\\
\left(5^{\alpha_{0,5}},~2^{\alpha_{0,3}},~1^{\alpha_{0,1}+\sum_{i\geq0}\alpha_{i,3}} \right) & \text{if}\ k=0.
\end{array}
\right.$
\subsection{Proof of Theorem \ref{Thm_main7.1}}
In the Young diagram of a partition, a hook of length 2, which we call a 2-hook, may arise in two different ways. 
\begin{enumerate}[(a)]
\item We call a 2-hook an $m$-2-hook if it appears due to the multiplicity of a part being greater than one. \item We call a 2-hook a $g$-2-hook if it appears in the column corresponding to a part  $\lambda_i$ with gap between $\lambda_i$ and $\lambda_{i+1}$ being more than 1. 
\end{enumerate}
For example, see Figure \ref{Figure7.2}. 
\begin{figure}[h]
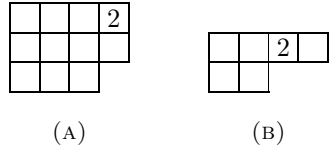

	\centering
	\begin{minipage}[b]{0.2\textwidth}
		\[\young(~~~2,~~~~,~~~)\]
		\subcaption{}
	\end{minipage}
	\begin{minipage}[b]{0.2\textwidth}
		\[\young(~~2~,~~)\]
		\subcaption{}
	\end{minipage}
	\caption{Types of 2-hooks: (a) $m$-2-hook and (b) $g$-2-hook}\label{Figure7.2}
\end{figure}
\begin{proof}[Proof of Theorem \ref{Thm_main7.1}]
Note that for $k\geq1$, $\tau_k$ and $(\Phi_{2,n}(\tau))_k$ have the same number of $m$-2-hooks but the number of $g$-2-hooks for $\tau_k$ is either equal to or one more than the number of $g$-2-hooks for $(\Phi_{2,n}(\tau))_k$.  Also, the number of 2-hooks in  $\tau_0$ and $(\Phi_{2,n}(\tau))_0$ differ by at most 1. 
\par 
The idea of our proof is as follows. The number of 2-hooks in $\tau_k$ and $(\Phi_{2,n}(\tau))_k$ differ by at most 1. For the case in which a triple $\tau_k$ loses a 2-hook while going under the map $\Phi_{2,n}$, we  assign a distinct triple to $\tau_k$ to compensate the loss of one 2-hook for it. For the other case, when the number of 2-hooks is same for $\tau_k$ and $(\Phi_{2,n}(\tau))_k$, we are done. In this way, we prove that a partition $\tau\in \mathcal{B}_2(n)$ either has the number of 2-hooks less than the number of 2-hooks in $\Phi_{2,n}(\tau)\in\mathcal{B}_3(n)$, or (in the other case, when $\tau$ loses 2-hooks while going under $\Phi_{2,n}$) along with $\Phi_{2,n}(\tau)$ we associate a partition, say $\tau'$, to $\tau$ which compensates the loss.
\par We study triples $\tau_k$ in four cases. The cases in which $(\Phi_{2,n}(\tau))_k$ has one 2-hook fewer than $\tau_k$, we associate a 4-tuple (a part of a partition in $\mathcal{B}_3(n)$ and different than $(\Phi_{2,n}(\tau))_k$) to $\tau_k$, which has at least one 2-hook.\\
\textbf{Case 1: $\alpha_{k,3}=0$.} In this case, the number of 2-hooks in $\tau_k$ is the same as the number of 2-hooks in $(\Phi_{2,n}(\tau))_k=\tau_k$, if $k\geq1$. For $k=0$, the number of 2-hooks in $(\Phi_{2,n}(\tau))_0$ is greater than or equal to the number of 2-hooks in $\tau_0$. \\
\textbf{Case 2: $\alpha_{k,1}=0$.} For $k\geq1$, the number of 2-hooks in $\tau_k$ is the same as the number of 2-hooks in $(\Phi_{2,n}(\tau))_k$. For $k=0$, if $\tau_0\ne(5^{\alpha_{0,5}},~3)$ then the number of 2-hooks in $\tau_k$ is less than or equal to the number of 2-hooks in $(\Phi_{2,n}(\tau))_0$. If $\tau_0=(5^{\alpha_{0,5}},~3)$ and $\alpha_{0,5}\ne0$ then we cover the loss of a 2-hook by associating $\rho_0:=(5^{\alpha_{0,5}-1},4^2,1^x)$ to $\tau_0$, where $x$ is the multiplicity of 1 coming in the scene due to other triples of $\tau=(\tau_k)_{k\geq0}$.
If $\tau_0=(3)$ (i.e., $\alpha_{0,5}=0$ in $\tau_0=(5^{\alpha_{0,5}},~3)$) then we cover the loss of 2-hook as follows. Since $n>3$, there is the smallest part with part size greater than or equal to 5, say $\lambda_i$. In this case, we take 5 from the part $\lambda_i$ and associate $(4^2)$ to $\tau_0=(3)$. For the remaining part $\lambda_i-5$, we proceed by considering it as a part of the partition under consideration and if $\lambda_i-5=6r+6$, for some $r\geq0$, then we change it to $(6r+5,1)$ along with other parts while applying $\Phi_{2,n}$. In this case $\rho_0:=(5^w,4^{2+z},2^y,1^x)$, where $x$ is the multiplicity of 1 coming due to the other triples; $y,z,w$ are the multiplicities of parts $2,4,5$ (respectively), which may occur due to the part $\lambda_i-5$. For example, if $\tau=(11,3)$ then $\rho_0=(5,4^2,1)$; if $\tau=(7,3)$ then $\rho_0=(4^2,2)$; if $\tau=(9,3)$ then $\rho_0=(4^3)$; if $\tau=(5^2,3)$ then $\rho_0=(5,4^2)$.\\
\textbf{Case 3: $\alpha_{k,3}>1$ and $\alpha_{k,1}\neq0$.} In this case, there is at most one loss of 2-hook in $(\Phi_{2,n}(\tau))_k$, which we cover by the following map
\begin{align*}
f(\tau_k)&=f\left(((6k+5)^{\alpha_{k,5}},~(6k+3)^{\alpha_{k,3}},~(6k+1)^{\alpha_{k,1}}  )_{k}\right)\nonumber\\
&=\left((6k+5)^{\alpha_{k,5}},~6k+4,~(6k+2)^{\alpha_{k,3}-1},~(6k+1)^{\alpha_{k,1}} ;~1^{\alpha_{k,3}-2}\right)_{k}.
\end{align*}
In this case, we associate 
$$\sigma_k:=\left((6k+5)^{\alpha_{k,5}},~6k+4,~(6k+2)^{\alpha_{k,3}-1},~(6k+1)^{\alpha_{k,1}} \right)_{k}$$
to $\tau_k$ for $k\geq1$. For $\tau_0$, $\sigma_0=(5^{\alpha_{0,5}},~4,~2^{\alpha_{0,3}-1},~1^{\alpha_{0,1}+s})$, where $s$ is the number of 1s due to other triples.\\
\textbf{Case 4: $\alpha_{k,3}=1$ and $\alpha_{k,1}\neq0$.} In this case also, there is at most one loss of 2-hook in $(\Phi_{2,n}(\tau))_k$, which we cover by the following map
\begin{align*}
g(\tau_k)&=g\left(((6k+5)^{\alpha_{k,5}},~6k+3,~(6k+1)^{\alpha_{k,1}})_{k} \right)\nonumber\\
&=\left\{
\begin{array}{lll}
\left((6k+5)^{\alpha_{k,5}},~6k+4,~(6k+1)^{\alpha_{k,1}-1};~6k-1,~1\right)_{k} & \text{if}\ k\geq1;\\
\left(5^{\alpha_{0,5}},~4,~1^{\alpha_{0,1}-1}\right) & \text{if}\ k=0.
\end{array}
\right.
\end{align*}
Here, for $k\geq1$, part $6k-1=6(k-1)+5$ is considered as a part of $\tau_{k-1}$, doing which does not decrease the number of 2-hooks in $\tau_{k-1}$.
In this case, we associate 
$$\delta_k:=\left((6k+5)^{\alpha_{k,5}},~6k+4,~(6k+1)^{\alpha_{k,1}-1}\right)_{k}$$
to $\tau_k$ for $k\geq1$. For $\tau_0$, $\sigma_0=(5^{\alpha_{0,5}},~4,~1^{\alpha_{0,1}-1+s})$, where $s$ is the number of 1s due to other triples.
\par Now, let $\tau=(\tau_k)_{k\geq0}\in\mathcal{B}_{2}(n)$. We consider the following two cases.\\
\textbf{Case A.} If the number of 2-hooks in $\tau_k$ is less than or equal to the number of 2-hooks in $(\Phi_{2,n}(\tau))_k$ for all $k$ (from Case 1 and Case 2), then we define $\tau^{*}:=\Phi_{2,n}(\tau)$. Clearly, $h_2(\tau)\leq h_2(\tau^{*})$.\\
\textbf{Case B.} If for any $k\geq0$, the number of 2-hooks in $(\Phi_{2,n}(\tau))_k$ is one less than the number of 2-hooks in $\tau_k$, we take $\tau'$ to be a partition in $\mathcal{B}_3(n)$ with $(\Phi_{2,n}(\tau))_k$ replaced by the required $\rho_0$, $\sigma_k$ or $\delta_k$, which has at least one 2-hook. In this case, we define $\tau^{*}:=(\Phi_{2,n}(\tau),\tau')$ and $h_2(\tau^{*}):=h_2(\Phi_{2,n}(\tau))+h_2(\tau')$ (Note that $\tau^{*}$ is a set of two partitions from $\mathcal{B}_3(n)$). In that way, in this case also we have, $h_2(\tau)\leq h_2(\tau^{*})$.
\par Finally, since $\Phi_{2,n}$ is an injective map, all $\Phi_{2,n}(\tau)$ are distinct. Note that $(\Phi_{2,n}(\tau))_k$, $\rho_0$, $\sigma_k$ and $\delta_k$ are all distinct as well. Therefore, $\tau'$ and $\Phi_{2,n}(\tau)$ are also distinct.  For example, see Table \ref{Table7.1}.
\par Hence, we have
\begin{align*}
b_{2,2}(n)&=\sum_{\tau\in\mathcal{B}_2(n)}h_2(\tau)=\sum_{\substack{\tau\in\mathcal{B}_2(n)\\ \text{Case A}}}h_2(\tau)+\sum_{\substack{\tau\in\mathcal{B}_2(n)\\ \text{Case B}}}h_2(\tau)\\
&\leq \sum_{\substack{\tau\in\mathcal{B}_2(n)\\ \text{Case A}}}h_2(\Phi_{2,n}(\tau))+\sum_{\substack{\tau\in\mathcal{B}_2(n)\\ \text{Case B}}}(h_2(\Phi_{2,n}(\tau)+h_2(\tau'))\\
&=\sum_{\tau\in\mathcal{B}_2(n)}h_2(\tau^{*})\leq \sum_{\tau\in\mathcal{B}_3(n)}h_2(\tau)=b_{3,2}(n).
\end{align*}
This completes the proof of the theorem. 
\end{proof}
\begin{center}
\begin{table}
\caption{Outline of the proof of Theorem \ref{Thm_main7.1} for $n=13$}\label{Table7.1}
\begin{tabular}[h]{|c|c|c|c|c|}
\hline
$\tau\in\mathcal{B}_2(13)$ & $\tau^{*}=\Phi_{2,n}(\tau)$ & $\tau^{*}=(\Phi_{2,n}(\tau),\tau')$ & $h_2(\tau)$ & $h_2(\tau^{*})$\\
\hline 
$(13)$ & $(13)$ & & 1 & 1 \\
\hline
$(11,1^2)$ & $(11,1^2)$ & & 2 & 2\\
\hline
$(9,3,1)$ &  & $((8,2,1^3),(8,4,1))$ & 2 & 2+2\\
\hline
$(9,1^4)$ & $(8,1^5)$ & & 2 & 2\\
\hline
$(7,5,1)$ & $(7,5,1)$ & & 2 & 2\\
\hline
$(7,3^2)$ & $(7,2^2,1^2)$ & & 3 & 3\\
\hline
$(7,3,1^3)$ &  & $((7,2,1^4),(7,4,1^2))$ & 3 & 2+3\\
\hline
$(7,1^6)$ & $(7,1^6)$ & & 2 & 2\\
\hline
$(5^2,3)$ &  & $((5^2,2,1),(5,4^2))$ & 3 & 2+2\\
\hline 
$(5^2,1^3)$ & $(5^2,1^3)$ & & 3 & 3\\
\hline 
$(5,3^2,1^2)$ &  & $((5,2^2,1^4),(5,4,2,1^2))$ & 4 & 3+2\\
\hline 
$(5,3,1^5)$ &  & $((5,2,1^6),(5,4,1^4))$ & 3 & 2+2\\
\hline 
$(5,1^8)$ & $(5,1^8)$ & & 2 & 2\\
\hline
$(3^4,1)$ &  & $((2^4,1^5),(4,2^3,1^3))$ & 2 & 2+3\\
\hline 
$(3^3,1^4)$ &  & $((2^3,1^7),(4,2^2,1^5))$ & 3 & 2+3\\
\hline 
$(3^2,1^7)$ &  & $((2^2,1^9),(4,2,1^7))$ & 3 & 2+2\\
\hline 
$(3,1^{10})$ &  & $((2,1^{11}),(4,1^9))$ & 2 & 1+2\\
\hline 
$(1^{13})$ & $(1^{13})$ & & 1 &1\\ 
\hline
\hline 
\multicolumn{3}{|c|}{Total number of 2-hooks}
 & 43 & 57 \\ 
\hline
\end{tabular}
\end{table}
\end{center}
\subsection{Proof of Theorem \ref{Thm_main7.2}}
In the Young diagram of a partition, a hook of length 3, which we call a 3-hook may arise in four different ways. 
\begin{enumerate}[(a)]
\item We call a 3-hook an $m_3$-3-hook if it arises due to the multiplicity of a part being greater than two and it appears in the third last column from the columns corresponding to $\lambda_i$ in the Young diagram.
\item We call a 3-hook a $g$-3-hook if it appears in the column corresponding to a part $\lambda_i$ with gap between $\lambda_i$ and $\lambda_{i+1}$ being more than 2.
\item We call a 3-hook an $m_2$-3-hook if it arises due to the multiplicity of a part $\lambda_i$ being at least two and it appears in the second last column from the columns corresponding to $\lambda_i$ in the Young diagram.
\item We call a 3-hook a $s$-3-hook if it appears in the column corresponding to a part $\lambda_i$ with gap between $\lambda_i$ and $\lambda_{i+1}$ being exactly 1 and the part $\lambda_{i+1}$ occurs once.
\end{enumerate}
For example, see Figure \ref{Figure7.1}.
\begin{figure}[h]
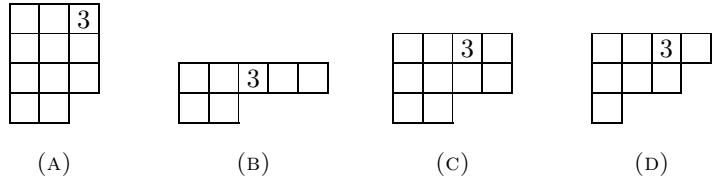

	\centering
	\begin{minipage}[b]{0.2\textwidth}
		\[\young(~~3,~~~,~~~,~~)\]
		\subcaption{}
	\end{minipage}
	\begin{minipage}[b]{0.2\textwidth}
		\[\young(~~3~~,~~)\]
		\subcaption{}
	\end{minipage}
	\begin{minipage}[b]{0.2\textwidth}
		\[\young(~~3~,~~~~,~~)\]
		\subcaption{}
	\end{minipage}
	\begin{minipage}[b]{0.2\textwidth}
		\[\young(~~3~,~~~,~)\]
		\subcaption{}
	\end{minipage}
	\caption{Types of 3-hooks: (a) $m_3$-3-hook, (b) $g$-3-hook, (c) $m_2$-3-hook, and (d) $s$-3-hook}\label{Figure7.1}
\end{figure}
\begin{proof}[Proof of Theorem \ref{Thm_main7.2}]
Similar to the case of 2-hooks, for $k\geq1$, $\tau_k$ and $(\Phi_{2,n}(\tau))_k$ have same number of $m_3$-3-hooks. Also, the number of $g$-3-hooks for $\tau_k$ is same as the number of $g$-3-hooks for $(\Phi_{2,n}(\tau))_k$, when $k\geq1$. However, the number of $m_2$-3-hooks for $(\Phi_{2,n}(\tau))_k$ is either equal to or one less than the number of $m_2$-3-hooks for $\tau_k$, for $k\geq1$. Note that for a 2-regular partition, there is no $s$-3-hook in its Young diagram. Therefore, the number of 3-hooks in $\tau_k$ can be, at the most, one less than the number of 3-hooks in $(\Phi_{2,n}(\tau))_k$. For $k=0$, the number of $m_3$-3-hooks for $\tau_0$ is either equal to or one less than the number of $m_3$-3-hooks for $(\Phi_{2,n}(\tau))_0$. Whereas, the number of $g$-3-hooks for $\tau_0$ is either equal to or one more than the number of $g$-3-hooks for $(\Phi_{2,n}(\tau))_0$ and same is the case for $m_2$-3-hooks. Therefore, the number of 3-hooks in $\tau_0$ can be, at the most, two less than the number of 3-hooks in $(\Phi_{2,n}(\tau))_0$.
\par The idea of the proof is similar to the proof of Theorem \ref{Thm_main7.1}. A partition $\tau\in \mathcal{B}_2(n)$ either has the number of 3-hooks less than or equal to the number of 3-hooks in $\Phi_{2,n}(\tau)\in\mathcal{B}_3(n)$, or (in the other case, when $\tau$ loses 3-hooks while going under $\Phi_{2,n}$) we associate a different partition, say $\tau'$, to $\tau$ which compensates the loss.
\par We study the triples $\tau_k$ in two cases. \\
\textbf{Case 1: $k\geq1$.} Note that the number of $m_2$-3-hooks for $\tau_k$ decreases under the map $\Phi_{2,n}$ only when $\alpha_{k,3}\geq2$ and $\alpha_{k,1}\geq1$. In that case we associate a new tuple to $\tau_k$ to cover the loss of an $m_2$-3-hook by using the following map
\begin{align*}
F(\tau_k)&=F\left(((6k+5)^{\alpha_{k,5}},~(6k+3)^{\alpha_{k,3}},~(6k+1)^{\alpha_{k,1}})_{k}\right)\nonumber\\
&=\left((6k+5)^{\alpha_{k,5}},~(6k+4)^2,~(6k+2)^{\alpha_{k,3}-2},~(6k+1)^{\alpha_{k,1}-1} ;~(6k-1),~1^{\alpha_{k,3}-2}\right)_{k}.
\end{align*}
In this case, we associate 
$$\theta_k:=\left((6k+5)^{\alpha_{k,5}},~(6k+4)^2,~(6k+2)^{\alpha_{k,3}-2},~(6k+1)^{\alpha_{k,1}-1} \right)_{k}$$
to $\tau_k$, which clearly has at least one 3-hook ($m_2$-3-hook corresponding to the parts $6k+4$) to compensate the loss. Here, part $6k-1=6(k-1)+5$ is considered as a part of $\tau_{k-1}$, doing which does not decrease the number of 2-hooks in $\tau_{k-1}$.\\
\textbf{Case 2: $k=0$.} In this case, there might be loss of at most two 3-hooks and that also when $\alpha_{0,3}>0$. We have $\tau_0=(5^{\alpha_{0,5}},~3^{\alpha_{0,3}},~1^{\alpha_{0,1}})$. Depending on the multiplicity of the part 3, $\alpha_{0,3}=4\ell+j$, $0\leq j\leq3$, we consider the following two cases.\\
\textbf{Subcase (a): $\ell>0$.} In this case, we compensate the loss with the following map:
\begin{align*}
G(\tau_0)=\left\{
\begin{array}{lll}
(5^{\alpha_{0,5}},~4^{3\ell},~1^{\alpha_{0,1}}) & \text{if}\ j=0; \\
(5^{\alpha_{0,5}},~4^{3\ell},~2,~1^{\alpha_{0,1}+1}) & \text{if}\ j=1; \\
(5^{\alpha_{0,5}},~4^{3\ell+1},~1^{\alpha_{0,1}+2}) & \text{if}\ j=2; \\
(5^{\alpha_{0,5}},~4^{3\ell+2},~1^{\alpha_{0,1}+1}) & \text{if}\ j=3. \\
\end{array}
\right.
\end{align*}
Clearly, in each case $G(\tau_0)$ has at least two 3-hooks. We associate $\theta_0$ to $\tau_0$, which is $G(\tau_0)$ including the multiplicity of part size 1 coming from the other triples $\tau_k$.\\
\textbf{Subcase (b): $\ell=0$.} Here, $j=0$ can not be the case since $\alpha_{0,3}>0$.  For $j=3$, the loss of a $3$-hook can be covered by the same map $G$ in the above subcase, i.e.,
\begin{align*}
G(\tau_0)=G((5^{\alpha_{0,5}},~3^3,~1^{\alpha_{0,1}}))=(5^{\alpha_{0,5}},~4^{2},~1^{\alpha_{0,1}+1}).
\end{align*}
We associate $\theta_0=(5^{\alpha_{0,5}},~4^{2},~1^{\alpha_{0,1}+1+\sum_{k\geq1}\alpha_{k,3}})$ to $\tau_0$ in this case.\\
For $j=1$, $(\Phi_{2,n}(\tau))_0=\Phi_{2,n}\left((5^{\alpha_{0,5}},~3,~1^{\alpha_{0,1}})\right)=\left(5^{\alpha_{0,5}},~2,~1^{\alpha_{0,1}+\sum_{k\geq0}\alpha_{k,3}} \right)$. If either $\alpha_{0,1}\neq0$ or $\sum_{k\geq0}\alpha_{k,3}\neq1$ then there is no loss of 3-hook under $\Phi_{2,n}$.  If $\alpha_{0,1}=0$ and $\sum_{k\geq0}\alpha_{k,3}=1$, then the loss of a 3-hook is covered by taking 1 from $\sum_{k\geq0}\alpha_{k,3}$ and changing part size 3 to part size 4 as follows
\begin{align*}
H(\tau_0)=H((5^{\alpha_{0,5}},~3))=\left(5^{\alpha_{0,5}},~4\right).
\end{align*}
In this case $\theta_0=H(\tau_0)$.\\
For $j=2$, $(\Phi_{2,n}(\tau))_0=\Phi_{2,n}\left((5^{\alpha_{0,5}},~3^2,~1^{\alpha_{0,1}})\right)=\left(5^{\alpha_{0,5}},~2^2,~1^{\alpha_{0,1}+\sum_{k\geq0}\alpha_{k,3}} \right)$. If either $\alpha_{0,1}\neq0$ or $\sum_{k\geq0}\alpha_{k,3}\neq0$, then the loss of a 3-hook is covered by 
\begin{align*}
I(\tau_0)=I((5^{\alpha_{0,5}},~3^2,~1^{\alpha_{0,1}}))=\left(5^{\alpha_{0,5}},~4,~1^{\alpha_{0,1}+\sum_{k\geq0}\alpha_{k,3}}\right).
\end{align*}
For $\alpha_{0,1}=0$ and $\sum_{k\geq0}\alpha_{k,3}=0$, let $n>6$. Then there is the smallest part with part size greater than or equal to 5, say $\lambda_i$. In this case, we take 4 from the part $\lambda_i$ and associate $(4,~2^3)$ to $(3^2)$. For the remaining part $\lambda_i-4$, we proceed by considering it as a part of the partition and if $\lambda_i-4=6r+6$, for some $r\geq0$, then we change it to $(6r+5,1)$ along with other parts while applying $\Phi_{2,n}$. If the final multiplicity of 1 is $v$ then $(1^v)$ is changed to $(2^{v/2})$ or $(2^{(v-1)/2},1)$, depending on $v$ being even or odd, respectively. In this case $\theta_0:=(5^w,4^{1+z},2^y,1^x)$, where $x$ is the multiplicity of 1 (either 0 or 1); $y,z,w$ are the multiplicities of parts $2,4,5$ (respectively), which may also occur due to the part $\lambda_i-4$. For example, if $\tau=(11,3^2)$ then $\tau'=(7,4,2^3)$ and $\theta_0=(4,2^3)$; if $\tau=(7,3^2)$ then $\tau'=\theta_0=(4,2^4,1)$; if $\tau=(5^2,3^2)$ then $\tau'=\theta_0=(5,4,2^3,1)$.
\par Now, let $\tau=(\tau_k)_{k\geq0}\in\mathcal{B}_{2}(n)$ and $n>6$. We consider two cases.\\
\textbf{Case A.} If the number of 3-hooks in $\tau_k$ is less than or equal to the number of 3-hooks in $(\Phi_{2,n}(\tau))_k$ for all $k$, then we define $\tau^{*}:=\Phi_{2,n}(\tau)$. Clearly, $h_3(\tau)\leq h_3(\tau^{*})$.\\
\textbf{Case B.} If for any $k\geq0$, the number of 3-hooks in $(\Phi_{2,n}(\tau))_k$ is less than the number of 3-hooks in $\tau_k$, we take $\tau'$ to be a partition in $\mathcal{B}_3(n)$ with $(\Phi_{2,n}(\tau))_k$ replaced by the required $\theta_k$, which covers the loss of one or two 3-hooks. In this case, we define $\tau^{*}:=(\Phi_{2,n}(\tau),\tau')$ and $h_3(\tau^{*}):=h_3(\Phi_{2,n}(\tau))+h_3(\tau')$ (Note that $\tau^{*}$ is a set of two partitions from $\mathcal{B}_3(n)$). In this case also we have, $h_3(\tau)\leq h_3(\tau^{*})$.
\par Since $\Phi_{2,n}$ is an injective map, all $\Phi_{2,n}(\tau)$ are distinct. Note that $(\Phi_{2,n}(\tau))_k$ and $\theta_k$ are all distinct as well. Therefore, all $\tau'$ and $\Phi_{2,n}(\tau)$ are also distinct.  For example, see Table \ref{Table7.2}. Hence, we have for $n>6$
\begin{align*}
b_{2,3}(n)&=\sum_{\tau\in\mathcal{B}_2(n)}h_3(\tau)=\sum_{\substack{\tau\in\mathcal{B}_2(n)\\ \text{Case A}}}h_3(\tau)+\sum_{\substack{\tau\in\mathcal{B}_2(n)\\ \text{Case B}}}h_3(\tau)\\
&\leq \sum_{\substack{\tau\in\mathcal{B}_2(n)\\ \text{Case A}}}h_3(\Phi_{2,n}(\tau))+\sum_{\substack{\tau\in\mathcal{B}_2(n)\\ \text{Case B}}}(h_3(\Phi_{2,n}(\tau)+h_2(\tau'))\\
&=\sum_{\tau\in\mathcal{B}_2(n)}h_3(\tau^{*})\leq \sum_{\tau\in\mathcal{B}_3(n)}h_3(\tau)=b_{3,3}(n).
\end{align*}
For $0\leq n\leq6$, it is easy to check that the inequality $b_{2,3}(n)\leq b_{3,3}(n)$ holds. This completes the proof. 
\end{proof}
\begin{center}
	\begin{table}
		\caption{Outline of the proof of Theorem \ref{Thm_main7.2} for $n=13$}\label{Table7.2}
		\begin{tabular}[h]{|c|c|c|c|c|}
			\hline
			$\tau\in\mathcal{B}_2(13)$ & $\tau^{*}=\Phi_{2,n}(\tau)$ & $\tau^{*}=(\Phi_{2,n}(\tau),\tau')$ & $h_3(\tau)$ & $h_3(\tau^{*})$\\
			\hline 
			$(13)$ & $(13)$ & & 1 & 1 \\
			\hline
			$(11,1^2)$ & $(11,1^2)$ & & 1 & 1\\
			\hline
			$(9,3,1)$ &  $(8,2,1^3)$ &  & 1 & 2\\
			\hline
			$(9,1^4)$ & $(8,1^5)$ & & 2 & 2\\
			\hline
			$(7,5,1)$ & $(7,5,1)$ & & 1 & 1\\
			\hline
			$(7,3^2)$ & &$((7,2^2,1^2),(4,2^4,1))$  & 3 & 1+2\\
			\hline
			$(7,3,1^3)$ &  $(7,2,1^4)$ & & 2 & 2\\
			\hline
			$(7,1^6)$ & $(7,1^6)$ & & 2 & 2\\
			\hline
			$(5^2,3)$ &  $(5^2,2,1)$ & &2 & 3\\
			\hline 
			$(5^2,1^3)$ & $(5^2,1^3)$ & & 3 & 3\\
			\hline 
			$(5,3^2,1^2)$ &  & $((5,2^2,1^4),(5,4,1^4))$ & 1 & 2+2\\
			\hline 
			$(5,3,1^5)$ & $(5,2,1^6)$ &  &1 & 2\\
			\hline 
			$(5,1^8)$ & $(5,1^8)$ & & 2 & 2\\
			\hline
			$(3^4,1)$ &  & $((2^4,1^5),(4^3,1))$ & 2 & 2+3\\
			\hline 
			$(3^3,1^4)$ &  & $((2^3,1^7),(4^2,1^5))$ & 3 & 2+3\\
			\hline 
			$(3^2,1^7)$ &  & $((2^2,1^9),(4,1^9))$ & 2 & 1+2\\
			\hline 
			$(3,1^{10})$ &  $(2,1^{11})$ &  & 2 & 2\\
			\hline 
			$(1^{13})$ & $(1^{13})$ & & 1 &1\\ 
			\hline
			\hline 
			\multicolumn{3}{|c|}{Total number of 3-hooks}
			& 32 & 44 \\ 
			\hline
		\end{tabular}
	\end{table}
\end{center}
\section{Concluding Remarks}
Let $t\geq2$ and $k\geq1$ be integers. The main motive of our study is to find the biases among $b_{t,k}(n)$ and $d_{t,k}(n)$, for fixed values of $k$. If $\lambda$ is a $t$-distinct partition of $n$, then it is also a $(t+1)$-distinct partition of $n$. Therefore, $d_{t+1,k}(n)\geq d_{t,k}(n)$, for all $n\geq0$.
For a fixed value of $k$, we want to find the biases in the following diagram:
\begin{center}
\begin{tikzpicture}
\node (R1) at (0,0) {$b_{t+1,k}(n)$};
\node (R2) at (0,1.5) {$b_{t,k}(n)$};	
\node (R3) at (2.5,0) {$d_{t+1,k}(n)$};
\node (R4) at (2.5,1.5) {$d_{t,k}(n)$};
\node (R5) at (0,0.75) {$?$};
\node (R6) at (1.25,0) {$?$};
\node (R6) at (1.25,1.5) {$?$};
\node (R6) at (2.5,0.75) {$\rotatebox{90}{\ensuremath{\geqslant}}$};	
\end{tikzpicture}
\end{center}
In \cite[Theorem 1.6]{Li_Wang_2019}, Li and Wang proved that for all $t\geq2$ and $n\geq0$
$$\sum_{\lambda\in\mathcal{D}_t(n)}\overline{\ell}(\lambda)-\sum_{\lambda\in\mathcal{B}_t(n)}\overline{\ell}(\lambda)\geq0,$$
where $\mathcal{D}_t(n)$ is the set of all $t$-distinct partitions of $n$. Since $h_1(\lambda)=\overline{\ell}(\lambda)$, it implies that $d_{t,1}(n)\geq b_{t,1}(n)$, for all $t\geq2$ and $n\geq0$. Also, from Theorem \ref{Theorem7.01} we have $b_{t+1,1}(n)\geq b_{t,1}(n)$, for all $t\geq2$ and $n\geq0$.
Therefore, for $k=1$, all the biases are known and the diagram is complete for all $t\geq2$ and $n\geq0$:
\begin{center}
\begin{tikzpicture}
\node (R1) at (0,0) {$b_{t+1,1}(n)$};
\node (R2) at (0,1.5) {$b_{t,1}(n)$};	
\node (R3) at (2.5,0) {$d_{t+1,1}(n)$};
\node (R4) at (2.5,1.5) {$d_{t,1}(n)$};
\node (R5) at (0,0.75) {$\rotatebox{90}{\ensuremath{\geqslant}}$};
\node (R6) at (1.25,0) {$\leqslant$};
\node (R6) at (1.25,1.5) {$\leqslant$};
\node (R6) at (2.5,0.75) {$\rotatebox{90}{\ensuremath{\geqslant}}$};
\end{tikzpicture}
\end{center}
It is known due to Ballantine et al. \cite{Ballantine_2023} that $b_{2,2}(n)\geq d_{2,2}(n)$ for all $n\geq0$ and $b_{2,3}(n)\geq d_{2,3}(n)$ for all $n\geq8$. Also, we have Theorems \ref{Thm_main7.1} and \ref{Thm_main7.2}. Therefore, for $k=2,3$, we have the following diagram for all but finitely many $n\geq 0$:
\begin{center}
\begin{tikzpicture}
\node (R1) at (0,0) {$b_{3,k}(n)$};
\node (R2) at (0,1.5) {$b_{2,k}(n)$};	
\node (R3) at (2.5,0) {$d_{3,k}(n)$};
\node (R4) at (2.5,1.5) {$d_{2,k}(n)$};
\node (R5) at (0,0.75) {$\rotatebox{90}{\ensuremath{\geqslant}}$};
\node (R6) at (1.25,0) {$?$};
\node (R6) at (1.25,1.5) {$\geqslant$};
\node (R6) at (2.5,0.75) {$\rotatebox{90}{\ensuremath{\geqslant}}$};
\end{tikzpicture}
\end{center}
Our method of the proof of Theorem \ref{Thm_main7.1} can not be generalized to prove the biases for the number of hooks of length 2 in $t$-regular partitions for the next values of $t$. However, numerical evidence suggest that the number of hooks of length 2 in $t$-regular partitions increases with increasing values of $t$. For example, in Table \ref{Table7.last} values in every column are in increasing order. In view of this, we propose the following conjecture.
\begin{conj}
Let $t\geq3$ be an integer. We have $b_{t+1,2}(n)\geq b_{t,2}(n)$, for all $n\geq0$.
\end{conj}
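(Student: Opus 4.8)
We indicate a possible line of attack on this conjecture. The plan is to recast it as a generating-function inequality. Summing the two species of $2$-hooks used in the proof of Theorem~\ref{Thm_main7.1} --- the $m$-$2$-hooks, one for each part size occurring with multiplicity at least two, and the $g$-$2$-hooks, one for each part size $v\ge2$ that occurs in a partition while $v-1$ does not --- over all $t$-regular partitions of $n$ and simplifying, one first establishes the identity
\[
\sum_{n\ge0}b_{t,2}(n)\,x^{n}=\Bigl(\prod_{\substack{i\ge1\\ t\nmid i}}\frac{1}{1-x^{i}}\Bigr)\left(\frac{2x^{2}}{1-x^{2}}+\frac{x^{2t-1}(1-x)^{2}-x^{t}}{1-x^{2t}}\right),
\]
which one checks against direct enumeration for small $n$ and $t$. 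Writing $P_{t}(x)=\prod_{t\nmid i}(1-x^{i})^{-1}$ for the $t$-regular partition generating function and $C_{t}(x)=\bigl(x^{2t-1}(1-x)^{2}-x^{t}\bigr)/(1-x^{2t})$ for the second summand in the factor above, the identity reads $\sum_{n}b_{t,2}(n)x^{n}=\tfrac{2x^{2}}{1-x^{2}}P_{t}(x)+C_{t}(x)P_{t}(x)$, so the conjecture is the assertion that
\[
\frac{2x^{2}}{1-x^{2}}\bigl(P_{t+1}(x)-P_{t}(x)\bigr)+\bigl(C_{t+1}(x)P_{t+1}(x)-C_{t}(x)P_{t}(x)\bigr)
\]
has non-negative coefficients.

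The first summand has non-negative coefficients, since $\tfrac{2x^{2}}{1-x^{2}}$ does and $P_{t+1}(x)-P_{t}(x)$ does by Lemma~\ref{Lemma7.1}; so everything comes down to showing that, degree by degree, no negative coefficient of $C_{t+1}P_{t+1}-C_{t}P_{t}$ --- and these are genuine, because $C_{t}(x)$ has negative coefficients, its lowest term being $-x^{t}$ --- is larger in absolute value than the positive contribution of the first summand. It is reassuring that this formulation already isolates the known exceptions: for $t=2$ one has $[x^{3}](C_{3}P_{3}-C_{2}P_{2})=-1$ while the first summand contributes $0$ at $x^{3}$, which is precisely why Theorem~\ref{Thm_main7.1} is stated for $n>3$. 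A parallel, combinatorial route, in the style of the proof of Theorem~\ref{Thm_main7.1}, keeps the injection $\Phi_{t,n}\colon\mathcal{B}_{t}(n)\to\mathcal{B}_{t+1}(n)$ of Lemma~\ref{Lemma7.1} as a backbone --- it replaces a part $(t+1)(t\ell+r)$ with $1\le r\le t-1$ by the two parts $t(t+1)\ell+rt$ and $r$ --- records how $\Phi_{t,n}$ changes the $2$-hook count locally, and, in each configuration where a $2$-hook is lost, attaches to $\tau$ a second distinct partition $\tau'\in\mathcal{B}_{t+1}(n)$ bearing a compensating $2$-hook, built as the auxiliary partitions $\rho_{0},\sigma_{k},\delta_{k}$ were in that proof (borrowing mass from a large part of $\tau$ when necessary and recycling the remainder through $\Phi_{t,n}$, the finitely many small $n$ being checked by hand).

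The crux --- and exactly where the present paper stops --- is to make such a compensation uniform in $t$. For $t=2$ the only $\Phi$-move that destroys a $2$-hook is $3\mapsto(2,1)$, and essentially one spare part suffices to repair it; for $t\ge3$ the offending moves multiply with the residue $r$ (already $(8,4)\mapsto(6,3,2,1)$ loses a $2$-hook although $8\mapsto(6,2)$ in isolation does not, the new part $2$ landing next to a neighbour and closing a gap), the small repair parts --- sizes in $\{1,\dots,t-1\}$ together with $t+1,2t+1,\dots$ --- collide among the several triples of one partition, and one must still certify that all the $\Phi_{t,n}(\tau)$ and all the auxiliary $\tau'$ stay pairwise distinct in $\mathcal{B}_{t+1}(n)$. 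Analytically, the same difficulty returns as the need for a global bound on $C_{t+1}P_{t+1}-C_{t}P_{t}$, for which the natural tool would be a cancellation-free combinatorial model expanding $C_{t}(x)$ against $t$-regular partitions. Producing either that model or the uniform compensation scheme is what remains.
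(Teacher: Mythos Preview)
This statement is listed in the paper as an open \emph{conjecture}, not a theorem; the paper supplies no proof and in fact remarks explicitly that the method behind Theorem~\ref{Thm_main7.1} ``can not be generalized to prove the biases for the number of hooks of length~2 in $t$-regular partitions for the next values of~$t$,'' and that the general problem ``seems to be a hard problem.'' Your write-up is consistent with this: you do not claim a proof, only ``a possible line of attack,'' and you close by stating exactly what remains to be done. So there is no proof in the paper to compare yours against, and what you have submitted is, by your own description, a research programme rather than a proof.

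That said, the two routes you sketch are both natural continuations of the paper's ideas. The combinatorial one --- keep $\Phi_{t,n}$ as a backbone, track local $2$-hook losses, and attach an auxiliary $\tau'\in\mathcal{B}_{t+1}(n)$ to compensate --- is precisely the template of Theorem~\ref{Thm_main7.1}, and your diagnosis of why it jams for $t\ge3$ (the repair parts of sizes $1,\dots,t-1$ produced by different blocks collide, and distinctness of the $\tau'$ is no longer automatic) matches the paper's own assessment. The generating-function route is new relative to the paper: the closed form you state for $\sum_{n\ge0}b_{t,2}(n)x^{n}$ is plausible and does reproduce the paper's numerics for small $t$ and $n$, but it is asserted here rather than derived, and a careful proof of that identity would itself be worth recording. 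Even granting the identity, the reduction you reach --- that the possibly negative coefficients of $C_{t+1}P_{t+1}-C_{t}P_{t}$ are always dominated by those of $\tfrac{2x^{2}}{1-x^{2}}(P_{t+1}-P_{t})$ --- is the whole difficulty restated, not resolved.

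In short: there is a genuine gap, namely that no argument here (or in the paper) actually establishes the inequality; but you acknowledge this squarely, and the status you assign to the statement is exactly the status the paper gives it.
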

\begin{center}
	\begin{table}
		\caption{Values of $b_{t,2}(n): 1\leq n\leq 12$ and $3\leq t\leq13$}\label{Table7.last}
		\begin{tabular}[h]{c|cccccccccccc}
			$n\rightarrow$ & 1 & 2 & 3 & 4 & 5 & 6 & 7 & 8 & 9 & 10 & 11 & 12\\
			\hline
			$b_{3,2}(n)$ & 0 & 2 & 1 & 5 & 5 & 11 & 12 & 22 & 28 & 43 & 53 & 79\\
			$b_{4,2}(n)$ & 0 & 2 & 2 & 5 & 7 & 12 & 18 & 27 & 39 & 55 & 76 & 106\\
			$b_{5,2}(n)$ & 0 & 2 & 2 & 6 & 7 & 15 & 18 & 33 & 42 & 67 & 87 & 129\\
			$b_{6,2}(n)$ & 0 & 2 & 2 & 6 & 8 & 15 & 21 & 34 & 47 & 71 & 98 & 140\\
			$b_{7,2}(n)$ & 0 & 2 & 2 & 6 & 8 & 16 & 21 & 37 & 48 & 77 & 101 & 151\\
			$b_{8,2}(n)$ & 0 & 2 & 2 & 6 & 8 & 16 & 22 & 37 & 51 & 78 & 107 & 155\\
			$b_{9,2}(n)$ & 0 & 2 & 2 & 6 & 8 & 16 & 22 & 38 & 51 & 81 & 108 & 161\\
			$b_{10,2}(n)$ & 0 & 2 & 2 & 6 & 8 & 16 & 22 & 38 & 52 & 81 & 111 & 162\\
			$b_{11,2}(n)$ & 0 & 2 & 2 & 6 & 8 & 16 & 22 & 38 & 52 & 82 & 111 & 165\\
			$b_{12,2}(n)$ & 0 & 2 & 2 & 6 & 8 & 16 & 22 & 38 & 52 & 82 & 112 & 165\\
			$b_{13,2}(n)$ & 0 & 2 & 2 & 6 & 8 & 16 & 22 & 38 & 52 & 82 & 112 & 166
		\end{tabular}
	\end{table}
\end{center}  
Recently, several hook length biases among $t$-regular and $t$-distinct partitions have been established with the help of generating functions, see for example \cite{Ballantine_2023, Craig,Singh_Barman}. The generating functions of $b_{2,2}(n)$ and $b_{3,2}(n)$ are already known. Our proof of Theorem \ref{Thm_main7.1} does not use any generating function technique. It would be interesting to prove the bias established in Theorem \ref{Thm_main7.1} with the help of generating functions. To find a similar proof of Theorem \ref{Thm_main7.2} we need to first derive the generating function of $b_{3,3}(n)$ as it is not yet known. Further, it would be very interesting to know if for positive integers $k$ and $t\geq2$, there exists an integer $N_{t,k}$ such that $b_{t+1,k}(n)\geq b_{t,k}(n)$, for all $n\geq N_{t,k}$. This is true for  certain values of $t$ and $k$ as we see in Theorems \ref{Theorem7.01}, \ref{Thm_main7.1} and \ref{Thm_main7.2}. However, proving similar results for general values of $t$ and $k$ seems to be a hard problem.

\end{document}